\numberwithin{equation}{section}
\newtheorem{thm}{Theorem}[section]
\theoremstyle{definition}
\newtheorem{rem}[thm]{Remark}
\begin{document}
\allowdisplaybreaks

\newcommand{\arXivNumber}{1912.03649}

\renewcommand{\thefootnote}{}

\renewcommand{\PaperNumber}{033}

\FirstPageHeading

\ShortArticleName{Nonnegative Scalar Curvature and Area Decreasing Maps}

\ArticleName{Nonnegative Scalar Curvature\\ and Area Decreasing Maps\footnote{This paper is a~contribution to the Special Issue on Scalar and Ricci Curvature in honor of Misha Gromov on his 75th Birthday. The full collection is available at \href{https://www.emis.de/journals/SIGMA/Gromov.html}{https://www.emis.de/journals/SIGMA/Gromov.html}}}

\Author{Weiping ZHANG}

\AuthorNameForHeading{W.~Zhang}

\Address{Chern Institute of Mathematics \& LPMC, Nankai
University, Tianjin 300071, P.R.~China}
\Email{\href{mailto:weiping@nankai.edu.cn}{weiping@nankai.edu.cn}}
\URLaddress{\url{http://old.cim.nankai.edu.cn/members/user/weiping/?name=weiping}}

\ArticleDates{Received December 18, 2019, in final form April 15, 2020; Published online April 22, 2020}

\Abstract{Let $\big(M,g^{TM}\big)$ be a noncompact complete spin Riemannian manifold of even dimension~$n$, with~$k^{TM}$ denote the associated scalar curvature. Let $f\colon M\rightarrow S^{n}(1)$ be a~smooth area decreasing map, which is locally constant near infinity and of nonzero degree. We show that if $k^{TM}\geq n(n-1)$ on the support of~${\rm d}f$, then $ \inf \big(k^{TM}\big)<0$. This answers a~question of Gromov. We use a simple deformation of the Dirac operator to prove the result. The odd dimensional analogue is also presented.}

\Keywords{scalar curvature; spin manifold; area decreasing map}

\Classification{53C27; 57R20; 58J20}

\renewcommand{\thefootnote}{\arabic{footnote}}
\setcounter{footnote}{0}

\section{Introduction} \label{s0}

It is well-known that starting with the famous Lichnerowicz vanishing theorem~\cite{L63}, Dirac operators have played important roles in the study of Riemannian metrics of positive scalar curvature on spin manifolds (cf.~\cite{GL83} and~\cite{LaMi89}). A notable example is Llarull's rigidity theorem~\cite{L98} which states that for a compact spin Riemannian manifold $\big(M,g^{TM}\big)$ of dimension $n$ such that the associated scalar curvature $k^{TM}$ verifies that $k^{TM}\geq n(n-1)$, any (non-strict) area decreasing smooth map $f\colon M\rightarrow S^n(1)$ of nonzero degree is an isometry.

Recently, Gromov states in \cite[p.~45]{Gr19} a noncompact extension of the Llarull theorem. Namely, if $\big(M,g^{TM}\big)$ is an $n$ dimensional noncompact complete spin Riemannian manifold, $f\colon M\rightarrow S^n(1)$ a smooth (non-strict) area decreasing map (which is locally constant near infinity) of nonzero degree such that $k^{TM}\geq n(n-1)$ on the support of~${\rm d}f$, then
\begin{gather}\label{0.1}
\inf \big(k^{TM}\big)\leq 0.
\end{gather}

The argument used by Gromov for~(\ref{0.1}) relies on the relative index theorem of Gromov--Lawson~\cite{GL83}, which depends on the positivity of $k^{TM}$ near infinity. Gromov then raises the question that whether the inequality in (\ref{0.1}) can actually be made {strict}.

The purpose of this short note is to provide a positive answer to this question when~$n$ is even. That is, (\ref{0.1}) can indeed be improved to $\inf \big(k^{TM}\big)< 0$. When $n$ is odd, we improve~(\ref{0.1}) to $\inf \big(k^{TM}\big)< 0$ under the condition that $k^{TM}> n(n-1)$ on the support of~${\rm d}f$.

The main idea of the proof, similar to \cite[equation~(1.11)]{Z19}, is to deform the involved twisted Dirac operator (constructed as in~\cite{L98})
on~$M$ by a suitable endomorphism of the twisted vector bundle (cf.~(\ref{1.7})). The deformed Dirac operator turns out to be invertible near infinity, and one can then apply the relative index theorem to complete the proof.

\section{The main results and their proofs}\label{s1}

This section is organized as follows. In Section \ref{s1.1}, we restate the main results of this paper. In Sections~\ref{s1.2} and~\ref{s1.3}, we prove the main results stated in Section~\ref{s1.1}.

\subsection{The main results}\label{s1.1}

Let $\big(M,g^{TM}\big)$ be an $n$-dimensional noncompact spin complete Riemannian manifold. Let~$k^{TM}$ be the associated scalar curvature. Let $S^n(1)$ be the standard $n$-dimensional unit sphere carrying its canonical metric. Following~\cite{GL83}, a smooth map $f\colon M \rightarrow S^n(1)$ is called area decreasing if for any two form $\alpha\in\Omega^2\big(S^n(1)\big)$, $f^*\alpha\in\Omega^2(M)$ verifies that
\begin{gather}\label{1.1}
|f^*\alpha|\leq |\alpha|.
\end{gather}

We now assume that $f\colon M\rightarrow S^n(1)$ is a smooth area decreasing map such that it is locally constant near infinity. That is, it is locally constant outside a compact subset $K\subset M$. We also assume that
\begin{gather}\label{1.2}
\deg (f)\neq 0.
\end{gather}

Let ${\rm d}f\colon TM\rightarrow TS^n(1)$ be the differential of $f$. The support of ${\rm d}f$ is defined to be $\operatorname{Supp}({\rm d}f)=\overline{\{ x\in M\colon {\rm d}f(x)\neq 0\}}$.

The main result of this short note can be stated as follows.

\begin{thm}\label{t1.1} Under the above assumptions, if $n$ is even and
\begin{gather}\label{1.3}
k^{TM}\geq n(n-1) \qquad {\rm on}\quad \operatorname{Supp}({\rm d}f),
\end{gather}
then one has
\begin{gather}\label{1.4}
\inf \big(k^{TM}\big)<0.
\end{gather}
\end{thm}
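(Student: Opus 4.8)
The plan is to argue by contradiction and to upgrade Llarull's compact argument \cite{L98} by a deformation of the Dirac operator in the spirit of \cite{Z19}. So assume that \eqref{1.4} fails, i.e.\ $k^{TM}\geq 0$ on all of $M$. View $S^n(1)$ as the unit sphere in $\R^{n+1}$, let $E=f^*S\big(TS^n(1)\big)$ be the pull-back of the ($\Z_2$-graded) spinor bundle of the sphere, with its pulled-back connection $\nabla^E$ and Clifford action $\widehat{c}$, and form the twisted Dirac operator $D^E$ acting on $S(TM)\,\widehat{\otimes}\,E$. The Lichnerowicz--Llarull formula expresses $\big(D^E\big)^2$ as the connection Laplacian $\Delta^E$ plus $\tfrac14 k^{TM}$ plus a curvature term $\mathcal Q=\tfrac14\sum_{i,j}c(e_i)c(e_j)\widehat{c}(f_*e_i)\widehat{c}(f_*e_j)$; since $f$ is area decreasing, Llarull's pointwise estimate gives $\mathcal Q\geq-\tfrac{n(n-1)}{4}$, while $\mathcal Q=0$ outside $\operatorname{Supp}({\rm d}f)$. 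Using \eqref{1.3} on $\operatorname{Supp}({\rm d}f)$ and $k^{TM}\geq 0$ off it, we obtain $\big(D^E\big)^2\geq\Delta^E\geq 0$ on all of $M$. On a compact manifold this already finishes the proof, but on our noncompact $M$ the operator $D^E$ need not be Fredholm: near infinity $\big(D^E\big)^2$ is only $\geq\Delta^E$.

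To remedy this, pick a smooth field $\psi$ of \emph{odd}, self-adjoint endomorphisms of $E$ with $\psi^2=\Id$ which is $\nabla^E$-parallel --- a fixed constant endomorphism, $E$ being flat and trivial there --- outside the compact set $\operatorname{Supp}({\rm d}f)$; such $\psi$ exists because $f$ is locally constant near infinity. Put $\Psi=\Id_{S(TM)}\,\widehat{\otimes}\,\psi$, an odd self-adjoint bundle endomorphism, and for $\varepsilon>0$ consider the deformed operator $\widetilde{D}_\varepsilon=D^E+\varepsilon\Psi$, again an odd, self-adjoint Dirac-type operator. A direct computation gives
\[
\widetilde{D}_\varepsilon^{\,2}=\big(D^E\big)^2+\varepsilon^2\,\Id+\varepsilon\,c\big(\nabla^E\psi\big),
\]
where the error $c\big(\nabla^E\psi\big)$ is a bounded bundle endomorphism supported inside $\operatorname{Supp}({\rm d}f)$. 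In particular $\widetilde{D}_\varepsilon^{\,2}\geq\varepsilon^2$ off the compact set $\operatorname{Supp}({\rm d}f)$, so $\widetilde{D}_\varepsilon$ is invertible near infinity, hence Fredholm on the complete manifold $M$, with a well-defined index $\ind\big(\widetilde{D}_\varepsilon^{+}\big)$.

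Two computations of $\ind\big(\widetilde{D}_\varepsilon^{+}\big)$ now clash. On one hand, by the relative index theorem of Gromov--Lawson \cite{GL83} --- applied to $\widetilde{D}_\varepsilon$, which is invertible outside a compact set, and compared with the operator built from a constant map, whose index vanishes --- $\ind\big(\widetilde{D}_\varepsilon^{+}\big)$ equals the local index integral, which is Llarull's topological quantity $\langle\widehat{A}(TM)\,\ch(E^+-E^-),[M]\rangle=\deg(f)\,\chi\big(S^n(1)\big)=2\deg(f)$; by \eqref{1.2} this is nonzero, so $\widetilde{D}_\varepsilon$ has a nonzero $L^2$ element $s$ with $\widetilde{D}_\varepsilon s=0$. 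On the other hand, integrating $\big\langle\widetilde{D}_\varepsilon^{\,2}s,s\big\rangle=0$ by parts (legitimate since $M$ is complete and $s\in L^2$) yields
\[
0=\big\|\nabla^E s\big\|^2+\int_M\Big(\tfrac{k^{TM}}{4}+\mathcal Q+\varepsilon^2+\varepsilon\,c\big(\nabla^E\psi\big)\Big)|s|^2 ,
\]
and once $\varepsilon$ is taken larger than $\sup\big\|c\big(\nabla^E\psi\big)\big\|$, the integrand is $\geq\varepsilon^2|s|^2$ outside $\operatorname{Supp}({\rm d}f)$ and $\geq 0$ on it --- here \eqref{1.3}, Llarull's estimate $\mathcal Q\geq-\tfrac{n(n-1)}{4}$ and $k^{TM}\geq 0$ are used once more. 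Hence $s$ vanishes on the nonempty open set $M\setminus\operatorname{Supp}({\rm d}f)$, and weak unique continuation for the first-order elliptic operator $\widetilde{D}_\varepsilon$ forces $s\equiv 0$ (assuming $M$ connected, which we may), contradicting $s\neq 0$. This contradiction proves \eqref{1.4}.

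I expect the heart of the matter to be the construction of the deforming endomorphism $\psi$ together with the accompanying positivity bookkeeping: one needs $\widetilde{D}_\varepsilon^{\,2}$ to be positive near infinity (so that $\widetilde{D}_\varepsilon$ is Fredholm with the expected index) while the compactly supported error $\varepsilon\,c(\nabla^E\psi)$ remains absorbable by $\varepsilon^2$ once $\varepsilon$ is large (so that the kernel can still be killed). A secondary point requiring care is that the relative index theorem is unaffected by the zeroth-order term $\varepsilon\Psi$, which agrees near infinity with the corresponding term of the constant-map model and so contributes nothing to the relative index.
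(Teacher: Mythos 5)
Your reduction to a contradiction, the use of the Lichnerowicz--Llarull estimate, and the idea of adding a zeroth--order term to make the operator invertible near infinity all match the paper's strategy; but the specific deformation you choose breaks the argument at its crucial point, the index computation. First, the existence of a \emph{globally defined} odd self-adjoint involution $\psi$ of $E$ with $\psi^2=\Id$, parallel outside $\operatorname{Supp}({\rm d}f)$, is asserted but not proved: local constancy of $f$ near infinity only produces such a $\psi$ over $M\setminus\operatorname{Supp}({\rm d}f)$ (e.g.\ $c(X)+c(X)^*$ for a unit vector at each constant value); extending it over the compact core as an involution amounts to a global unitary isomorphism $E_+\cong E_-$ compatible with the data at infinity, which is exactly the kind of topological statement that the nonvanishing of the index obstructs. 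Second, and more fundamentally, even granting such a $\psi$, your first index computation is wrong. On a noncompact $M$ the pairing $\langle\widehat{A}(TM)\ch(E_+-E_-),[M]\rangle$ has no meaning; the index of the Callias-type operator $D^E+\varepsilon\Psi$ is a relative (compactly supported) pairing which depends on the endomorphism at infinity \emph{and on whether it extends invertibly over the interior}. With your globally invertible $\Psi$ the relative class is trivial: since $\Psi^2=\Id$ and $c\big(\nabla^E\psi\big)$ is bounded with compact support (and $k^{TM}\geq 0$ is being assumed), $(D^E+T\Psi)^2>0$ everywhere for $T$ large, while all operators $D^E+t\Psi$, $t\geq\varepsilon$, are uniformly invertible near infinity, so the index is constant in $t$ and equal to $0$. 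Thus both of your computations yield $0$ and no contradiction arises.

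The paper's construction is designed precisely to avoid this: it takes $V=c(X)+c(X)^*$ for a vector field $X$ on $S^n(1)$ vanishing only at a point $p\notin f(M\setminus K)$, so that $f^*V$ is invertible near infinity but \emph{not} on all of $M$; the zero of $X$ is what produces the factor $\chi\big(S^n(1)\big)=2$ in \eqref{1.11} and makes $\ind\big(D^E_{\varepsilon,+}\big)=2(-1)^{n/2}\deg(f)\neq 0$. The price is that $V^2=|X|^2$ is not bounded below, so the commutator error $\varepsilon\big[D^E,f^*V\big]$ cannot be absorbed by taking $\varepsilon$ large as you do; the paper instead takes $\varepsilon$ small and inserts the cutoff $\varphi$, switching the deformation off on the region where $\big|{\wedge}^2({\rm d}f)\big|$ is close to $1$ and Llarull's estimate has no slack. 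Your claim that no cutoff is needed rests on the same unavailable global involution, so the gap is genuine: you need either a proof that your $\psi$ exists \emph{and} a correct evaluation of the resulting relative index (which would then be $0$), or a deformation of the paper's type whose degeneracy locus carries the topology that makes the index nonzero.
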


When $n$ is odd, we have the following analogue which shows that~(\ref{1.4}) still holds when the inequality (\ref{1.3}) holds strictly.

\begin{thm}\label{t1.2} Under the assumptions above Theorem~{\rm \ref{t1.1}}, if $n$ is odd and
\begin{gather*}
k^{TM}> n(n-1) \qquad {\rm on}\quad \operatorname{Supp}({\rm d}f),
\end{gather*}
then \eqref{1.4} still holds.
\end{thm}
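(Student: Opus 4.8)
The plan is to mimic the even-dimensional argument of Theorem~\ref{t1.1} via the standard trick of crossing with a circle to reduce the odd case to the even case. Concretely, form $\widehat M = M\times S^1(\ell)$ with the product metric, where $S^1(\ell)$ is a circle of circumference $\ell$; this is a noncompact complete spin manifold of even dimension $n+1$, with scalar curvature $k^{T\widehat M}=k^{TM}$ (pulled back via the projection). Let $\hat f\colon \widehat M\to S^{n+1}(1)$ be the composition of the projection $M\times S^1(\ell)\to M\times S^1(\ell)/\sim\, \cong S^{n+1}$ — more precisely, collapse $M\times\{pt\}$ for $pt$ outside a compact set using that $f$ is locally constant near infinity — with a degree-one identification, chosen so that $\hat f$ is area decreasing (this costs a factor depending on $\ell$, which is why $\ell$ must be taken large) and has nonzero degree. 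The support of ${\rm d}\hat f$ sits inside $\operatorname{Supp}({\rm d}f)\times S^1(\ell)$ together with a collar region; on the collar one uses the strict inequality.

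The key steps, in order: (i) construct $\widehat M$ and $\hat f$ as above, verifying that $\hat f$ is smooth, area decreasing, locally constant near infinity, and of nonzero degree; (ii) observe that $k^{T\widehat M}\ge (n+1)n$ fails by a fixed gap on the part of $\operatorname{Supp}({\rm d}\hat f)$ coming from $\operatorname{Supp}({\rm d}f)$ — no, rather: arrange the spherical suspension scaling so that where ${\rm d}\hat f\ne 0$ one has the bound $k^{T\widehat M}\ge (n+1)n$; here the hypothesis $k^{TM}>n(n-1)$ (strict) gives exactly the room, for $\ell$ large enough, to absorb the discrepancy between $n(n-1)$ and $(n+1)n$ on the suspension neck and to keep $\hat f$ area decreasing; (iii) apply Theorem~\ref{t1.1} to $\big(\widehat M, g^{T\widehat M}\big)$ and $\hat f$ to conclude $\inf\big(k^{T\widehat M}\big)<0$; (iv) translate back: $\inf\big(k^{T\widehat M}\big)=\inf\big(k^{TM}\big)$, so $\inf\big(k^{TM}\big)<0$, which is \eqref{1.4}.

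An alternative, more self-contained route — and the one I would actually write out in detail, paralleling the paper's stated method — is to work directly on $M$ using the twisted Dirac operator of Llarull's construction and its deformation by the endomorphism (cf.~(\ref{1.7})), just as in the proof of Theorem~\ref{t1.1}. In odd dimensions the relevant index is a spectral-flow or $\Z_2$-type invariant, and the pointwise Lichnerowicz--Bochner--Weitzenböck estimate produces, on $\operatorname{Supp}({\rm d}f)$, a lower bound of the schematic form $k^{TM}-n(n-1)$ plus a nonnegative curvature term from the twisting bundle; the strict inequality $k^{TM}>n(n-1)$ is precisely what is needed to make the deformed operator invertible on all of $\operatorname{Supp}({\rm d}f)$ (not merely off a compact set), whereas in even dimensions the non-strict bound already suffices because there one exploits the separate area-decreasing contribution more efficiently. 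One then runs the relative index theorem exactly as before to derive a contradiction with $\inf\big(k^{TM}\big)\ge 0$.

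The main obstacle is step (ii) in the suspension approach — verifying that $\hat f$ can be made simultaneously area decreasing and of correct scalar-curvature compatibility, controlling the neck of the suspension sphere and showing the loss there is absorbed by the strict gap $k^{TM}-n(n-1)>0$ uniformly on the (compact) set $\operatorname{Supp}({\rm d}f)$; equivalently, in the direct approach, the obstacle is checking that strictness of \eqref{1.3} is exactly the amount of slack required to upgrade "invertible near infinity" to "invertible on $\operatorname{Supp}({\rm d}f)$," which is the one place the odd-dimensional estimate is genuinely weaker than the even one.
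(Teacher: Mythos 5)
Your instinct to suspend --- cross $M$ with a circle and map to $S^{n+1}(1)$ --- is indeed the paper's strategy, but the concrete reduction you propose has a genuine gap at your step (ii)/(iii). You cannot apply Theorem~\ref{t1.1} (nor its refinement \eqref{1.23} in Remark~\ref{t1.3}) as a black box to $\widehat M=M\times S^1(\ell)$: that theorem, on an $(n+1)$-dimensional manifold mapping to $S^{n+1}(1)$, demands $k^{T\widehat M}\geq (n+1)n$ on $\operatorname{Supp}({\rm d}\hat f)$, while the product metric has $k^{T\widehat M}=\pi^*k^{TM}$, which is unchanged no matter how large $\ell$ is. The strict hypothesis $k^{TM}>n(n-1)$ only yields some uniform gap $\eta>0$ on the compact set $\operatorname{Supp}({\rm d}f)$; it does not produce the fixed discrepancy $2n$ between $n(n-1)$ and $(n+1)n$, so ``taking $\ell$ large to absorb the discrepancy'' does not work --- enlarging the circle makes $\hat f$ more contracting but does nothing to the scalar curvature or to the threshold the even-dimensional theorem requires. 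Your alternative ``direct on $M$'' route via a spectral-flow or $\Z_2$-type index is only gestured at (no specific invariant, no relative index theorem on the noncompact odd-dimensional $M$, no link to $\deg f$), so it cannot carry the proof as written.

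What actually closes the gap --- and what the paper does --- is to redo the deformed-Dirac/relative-index argument of Section~\ref{s1.2} on $M_R=M\times S^1(R)$ rather than cite Theorem~\ref{t1.1}. The key point, due to Llarull, is that for $f_R=h\circ\big(f\times\frac1R\big)$ the curvature term of the twisting bundle $f_R^*S\big(TS^{n+1}(1)\big)$ is bounded below by $-\frac{n(n-1)}{4}\big|{\wedge}^2({\rm d}f)\big|$ plus an error of size $|{\rm d}f|\,O\big(\frac1R\big)$ (see \eqref{1.36}): the constant stays $n(n-1)$, not $(n+1)n$, because $f_R$ is $O\big(\frac1R\big)$-contracting in the circle direction; so the dimensional discrepancy is absorbed by the bundle-curvature estimate, not by the scalar curvature. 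The strict inequality \eqref{1.24} then serves to absorb, for $\varepsilon$ small and $R$ large, the $O\big(\frac1R\big)$ error and the commutator term $\varepsilon\big[D^{E_R},f_R^*V\big]$ on $\operatorname{Supp}({\rm d}f)\times S^1(R)$ (as in \eqref{1.37}), while off $\operatorname{Supp}({\rm d}f)$ that commutator is itself $O\big(\frac1R\big)$ by \eqref{1.32} and the $\varepsilon^2 f_R^*\big(V^2\big)$ term dominates; this also lets one drop the cutoff $\varphi$ of \eqref{1.7}. The contradiction then comes from the relative index computation $\operatorname{ind}\big(D^{E_R}_{\varepsilon,+}\big)=2(-1)^{\frac{n+1}{2}}\deg(f)\neq0$ exactly as in the even case. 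So your proposal needs to be rewritten with the suspension carried through the index argument itself, not routed through Theorem~\ref{t1.1}.
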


Theorems \ref{t1.1} and \ref{t1.2} will be proved in Sections~\ref{s1.2} and~\ref{s1.3} respectively.

\subsection{Proof of Theorem \ref{t1.1}}\label{s1.2}

Let $S(TM)=S_+(TM)\oplus S_-(TM)$ be the ${\bf Z}_2$-graded Hermitian vector bundle of spinors associated to $\big(TM,g^{TM}\big)$, carrying the canonically induced Hermitian connection $\nabla^{S(TM)}=\nabla^{S_+(TM)}+\nabla^{S_-(TM)}$ (cf.~\cite{LaMi89}). And we use similar notation for $ S^n(1)$.

Following~\cite{L98}, let $E=E_+\oplus E_-$ be the ${\bf Z}_2$-graded Hermitian vector bundle
\begin{gather}\label{1.4b}
f^*\big(S\big(TS^n(1)\big)\big)=f^*\big(S_+\big(TS^n(1)\big)\big)\oplus f^*\big(S_-\big(TS^n(1)\big)\big)
\end{gather}
over $M$ carrying the pull-back Hermitian connection $\nabla^E=\nabla^{E_+}+\nabla^{E_-}$. Let $R^E=\big(\nabla^E\big)^2$ be the curvature of $\nabla^E$.

Let $D^E\colon \Gamma\big(S(TM)\widehat\otimes E\big)\rightarrow
\Gamma\big(S(TM)\widehat\otimes E\big)$ be the canonically defined (twisted by~$E$) Dirac operator (cf. \cite{LaMi89}).\footnote{Here ``$\widehat\otimes$'' is the notation for the ${\bf Z}_2$-graded tensor product (cf.~\cite{Q85}).} Then one has the canonical splitting $D=D_++D_-$ with $D^E_+\colon \Gamma(S_+(TM) \otimes E_+\oplus S_-(TM)\otimes E_-) \rightarrow \Gamma(S_-(TM) \otimes E_+\oplus S_+(TM)\otimes E_-) $, while $D^E_-\colon \Gamma(S_+(TM) \otimes E_-\oplus S_-(TM)\otimes E_+) \rightarrow \Gamma(S_-(TM) \otimes E_-\oplus S_+(TM)\otimes E_+) $. Moreover, one has formally that
\begin{gather*}
\big(D^E_+\big)^*=D^E_-.
\end{gather*}

\looseness=-1 Take any $p\in S^n(1)\setminus {f(M\setminus K)}$. Let $X\in TS^n(1)$ be a smooth vector field on $S^n(1)$ such that $|X|>0$ on $S^n(1)\setminus \{p\}$. Let $v=c(X)\colon S_+\big(TS^n(1)\big)\rightarrow S_-\big(TS^n(1)\big)$ be the Clifford action of $X$. Let $v^*\colon S_-\big(TS^n(1)\big)\rightarrow S_+\big(TS^n(1)\big)$ be the adjoint of $v$ with respect to the Hermitian metrics on $S_\pm(TS^n(1))$. Let $V\colon S\big(TS^n(1)\big)\rightarrow S\big(TS^n(1)\big)$ be the self-adjoint odd endomorphism defined by
\begin{gather}\label{1.6}
V=v+v^*.
\end{gather}
Then one has
\begin{gather}\label{1.6a}V^2=|X|^2.
\end{gather}
Thus $V$ is invertible on $ {f(M\setminus K)} = \overline {f(M\setminus K)} $. Also, $f^*V$ extends to an action on $S(TM)\widehat\otimes E$ such that for any $\alpha\in S(TM)$, $u\in E$, one has $(f^*V )\big(\alpha\widehat \otimes u\big)=(-1)^{\deg (\alpha)}\alpha\widehat\otimes (f^*V)(u)$ (cf.~\cite{Q85}).

Let $U_{\frac{1}{2}}\subset M$ be the subset defined by
$U_{\frac{1}{2}}=\big\{ x\in \operatorname{Supp}({\rm d}f)\colon |{\rm d}f(x)|<\frac{1}{2}\big\}$. Let $V_{\frac{1}{2}}\subset M$ be the open subset defined by $V_{\frac{1}{2}}=\big\{ x \colon \big|{\wedge}^2({\rm d}f(x))\big|>\frac{1}{2}\big\}$, where $\wedge^2({\rm d}f)$ is the induced action of ${\rm d}f$ on the exterior product $\wedge^2(TM)$. Clearly, $\overline{U}_{\frac{1}{2}}\cap \overline{V}_{\frac{1}{2}}=\varnothing$.

Let $\varphi\colon M\rightarrow [0,1]$ be a smooth function such that $\varphi=1$ on $(M\setminus \operatorname{Supp}({\rm d}f) )\cup U_{\frac{1}{2}}$, while $\varphi=0$ on~$V_{\frac{1}{2}}$. The existence of $\varphi$ is clear.

Similar to \cite[equation~(1.11)]{Z19}, for any $\varepsilon>0$, let $D^E_\varepsilon\colon \Gamma\big(S(TM)\widehat\otimes E\big)\rightarrow
\Gamma\big(S(TM)\widehat\otimes E\big)$ be the deformed twisted Dirac operator defined by\footnote{In view of~\cite{Q85}, one may regard $D^E_\varepsilon$ as a ``super'' Dirac operator.}
\begin{gather}\label{1.7}
D^E_\varepsilon =D^E+\varepsilon \varphi f^*V.
\end{gather}
Let $D^E_{\varepsilon,+}\colon \Gamma(S_+(TM) \otimes E_+\oplus S_-(TM)\otimes E_-) \rightarrow \Gamma(S_-(TM) \otimes E_+\oplus S_+(TM)\otimes E_-) $ and $D^E_{\varepsilon,-}\colon \Gamma(S_+(TM) \otimes E_-\oplus S_-(TM)\otimes E_+) \rightarrow \Gamma(S_-(TM) \otimes E_-\oplus S_+(TM)\otimes E_+) $ be the natural restrictions.

From (\ref{1.7}), one has
\begin{gather}\label{1.8}
\big(D^E_\varepsilon\big)^2 =\big(D^E\big)^2+\varepsilon c({\rm d}\varphi) f^*V
+\varepsilon \varphi\big[D^E,f^*V\big]+\varepsilon^2\varphi^2f^*\big(V^2\big),
\end{gather}
where $c(\cdot)$ is the notation for the Clifford action, $[\cdot,\cdot]$ is the notation for the supercommutator in the sense of~\cite{Q85}, and we identify~${\rm d}\varphi$ with the gradient of $\varphi$.

Let $e_1, \dots, e_n$ be an orthonormal basis of $\big(TM,g^{TM}\big)$. By the definition of the Dirac operator, $D^E=\sum\limits_{i=1}^n c(e_i)\nabla_{e_i}$, one has (cf.~\cite{Z19})
\begin{gather}\label{1.13}
\big[D^E,f^*V\big] =\sum_{i=1}^nc (e_i ) f^* \big(\nabla^{S(TS^n(1))}_{f_*(e_i)}V\big).
\end{gather}

Since by definition $\varphi=1$ on $M\setminus K$, while $f$ is locally constant on $M\setminus K $, from (\ref{1.8}) and~(\ref{1.13}) we see that the following identity holds on $M\setminus K$,
\begin{gather}\label{1.9}
\big(D^E_\varepsilon\big)^2 =\big(D^E\big)^2 +\varepsilon^2 f^*\big(V ^2\big).
\end{gather}

As $V|_{ {f(M\setminus K)}}$ is invertible, from (\ref{1.9}), one sees that there is a constant $a>0$ such that for any $s\in \Gamma\big(S(TM)\widehat\otimes E\big)$ supported in a~compact subset of $M\setminus K$, one has
\begin{gather}\label{1.10}
\big\|D^E_\varepsilon s\big\| \geq \varepsilon a \|s\|.
\end{gather}

 From (\ref{1.10}), one sees that one can apply the relative index theorem in~\cite{GL83} to $D^E_{\varepsilon,+}$ (compare with \cite[Section~6$\frac{4}{5}$]{Gr96}). In particular, one gets, by similar computations as in~\cite{GL83} and \cite[Proposition~III.11.24]{LaMi89},
\begin{align}
\operatorname{ind}\big(D^E_{\varepsilon,+}\big)
&= \deg (f)\big\langle {\rm ch}\big(S_+\big(TS^n(1)\big)\big) -
 {\rm ch}\big(S_-\big(TS^n(1)\big)\big),\big[S^n(1)\big]\big\rangle
\nonumber\\
& =(-1)^{\frac{n}{2}}\deg (f) \chi\big(S^n(1)\big)
 =2(-1)^{\frac{n}{2}}\deg (f) .\label{1.11}
\end{align}

 By the Lichnerowicz formula \cite{L63} for $D^E$ (cf.~\cite{LaMi89}), one has
\begin{gather}\label{1.12}
\big(D^E\big)^2=-\Delta^E+\frac{k^{TM}}{4}+\frac{1}{2}\sum_{i, j=1}^n c (e_i )c (e_j )R^E (e_i,e_j ) ,
\end{gather}
where $-\Delta^E\geq 0$ is the corresponding Bochner Laplacian.

From \cite[equation~(4.6)]{L98}, one knows that
\begin{gather}\label{1.14}
\frac{1}{2}\sum_{i, j=1}^n c (e_i )c (e_j )R^E (e_i,e_j )\geq -\frac{n(n-1)}{4}\big|{\wedge}^2({\rm d}f)\big|.
\end{gather}

From (\ref{1.1}), (\ref{1.3}), (\ref{1.8}), (\ref{1.12}) and (\ref{1.14}), one has that near any $x\in V_{\frac{1}{2}}$,
\begin{gather}\label{1.15}
\big(D_\varepsilon^E\big)^2+\Delta^E \geq0.
\end{gather}

Near any $x\in \operatorname{Supp}({\rm d}f)\setminus \overline{V}_{\frac{1}{2}}$, by
(\ref{1.3}), (\ref{1.8}), (\ref{1.12}) and (\ref{1.14}), one has
\begin{gather}\label{1.16}
\big(D_\varepsilon^E\big)^2+\Delta^E \geq \frac{n(n-1)}{8}+\varepsilon c({\rm d}\varphi)V+\varepsilon \varphi \big[D^E,f^*V\big]+\varepsilon^2\varphi^2f^*\big(V^2\big).
\end{gather}

From (\ref{1.8}), (\ref{1.13}), (\ref{1.12}) and (\ref{1.14}), one has that near any $x\in M\setminus \operatorname{Supp}({\rm d}f)$,
\begin{gather}\label{1.18}
\big(D_\varepsilon^E\big)^2+\Delta^E \geq \frac{k^{TM}}{4}+\varepsilon^2f^*\big(V^2\big).
\end{gather}

Now assume that (\ref{1.4}) does not hold. Then one has over $M$ that
 \begin{gather}\label{1.19}
k^{TM}\geq 0.
\end{gather}

From (\ref{1.13}), (\ref{1.9}), (\ref{1.15})--(\ref{1.19}) and the compactness of $\operatorname{Supp}({\rm d}f)$, we see that when $\varepsilon>0$ is small enough, there is a smooth nonnegative endormorphism $a_\varepsilon$ of $S(TM)\widehat\otimes E$ such that
 \begin{gather}\label{1.20}
a_\varepsilon>0 \qquad {\rm on}\quad (M\setminus K)\cup U_{\frac{1}{2}}
\end{gather}
and that one has on $M$ that
\begin{gather}\label{1.21}
\big(D_\varepsilon^E\big)^2 \geq -\Delta^E +a_\varepsilon.
\end{gather}

From (\ref{1.20}) and (\ref{1.21}), one finds that the relative index of $D^E_{\varepsilon,+}$ verifies that
\begin{gather*}
\operatorname{ind}\big(D^E_{\varepsilon,+}\big)=0,
\end{gather*}
which contradicts (\ref{1.2}) and (\ref{1.11}). This completes the proof of Theorem~\ref{t1.1}.

\begin{rem}\label{t1.3}
In view of (\ref{1.10}) and \cite[Section~6$\frac{4}{5}$]{Gr96}, one sees that the above proof fits with Gromov's suggestion in \cite[p.~45]{Gr19} that one may use the Callias type index arguments to deal with (\ref{1.4}). Also, from the above proof one sees that~(\ref{1.3}) can be weakened to
\begin{gather}\label{1.23}
k^{TM}\geq n(n-1)\big|{\wedge}^2({\rm d}f)\big| \qquad {\rm on}\quad \operatorname{Supp}({\rm d}f),
\end{gather}
with the inequality being strict on $\operatorname{Supp}({\rm d}f)\setminus \{x\in M\colon {\rm d}f(x)\neq 0\}$. With~(\ref{1.23}) the condition~(\ref{1.1}) is no longer needed.
\end{rem}

\subsection{Proof of Theorem \ref{t1.2}}\label{s1.3}

In view of Remark \ref{t1.3}, we will state and prove the following refined version of Theorem \ref{t1.2}.

\begin{thm}\label{t1.4} Let $\big(M,g^{TM}\big)$ be a noncompact spin complete Riemannian manifold of odd dimension $n$, and $f\colon M\rightarrow S^n(1)$ be a smooth map which is locally constant near infinity and of nonzero degree. If we assume that the scalar curvature $k^{TM}$ of $g^{TM}$ verifies that
\begin{gather}\label{1.24}
k^{TM}> n(n-1)\big|{\wedge}^2({\rm d}f)\big| \qquad {\rm on}\quad \operatorname{Supp}({\rm d}f),
\end{gather}
then one has
\begin{gather}\label{1.25}
\inf\big(k^{TM}\big)<0.
\end{gather}
\end{thm}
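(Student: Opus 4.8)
The plan is to adapt the even-dimensional argument of Section~\ref{s1.2} to the odd-dimensional setting, the principal new input being a device to reduce an odd-dimensional index problem to an even-dimensional one. First I would pass from $M$ to the product $M\times S^1$ (equivalently, suspend the map $f$): set $\widehat M=M\times S^1$, which is a complete spin manifold of even dimension $n+1$, and build a map $\widehat f\colon \widehat M\to S^{n+1}(1)$ by composing $f\times\mathrm{id}$ with the standard suspension $S^n(1)\times S^1\to S^{n+1}(1)$ that collapses the two ends. One arranges $\widehat f$ to be smooth, locally constant near infinity, and of nonzero degree (its degree equals $\deg(f)$ up to sign), and one checks that $|\wedge^2(\mathrm{d}\widehat f)|\le |\wedge^2(\mathrm{d}f)|$ pointwise since the suspension is distance non-increasing in the relevant directions. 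The scalar curvature of the product metric is $k^{T\widehat M}=k^{TM}$, so hypothesis~\eqref{1.24} for $f$ gives $k^{T\widehat M}>n(n-1)|\wedge^2(\mathrm{d}\widehat f)|$ on $\operatorname{Supp}(\mathrm{d}\widehat f)$. One would like to conclude $\inf(k^{T\widehat M})<0$ from Theorem~\ref{t1.1} (in the form of Remark~\ref{t1.3}), and since $\inf(k^{T\widehat M})=\inf(k^{TM})$ this yields~\eqref{1.25}.

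The main obstacle is the gap between the constants: Theorem~\ref{t1.1}/Remark~\ref{t1.3} is stated for dimension $n$ with threshold $n(n-1)$, whereas on $\widehat M$ of dimension $n+1$ the natural threshold from Llarull's curvature estimate~\eqref{1.14} is $(n+1)n|\wedge^2(\mathrm{d}\widehat f)|$, which is strictly larger than the $n(n-1)|\wedge^2(\mathrm{d}\widehat f)|$ that our hypothesis supplies. This is exactly why a \emph{strict} inequality is needed in odd dimensions and merely $\ge$ will not do: the suspended map $\widehat f$ has $\mathrm{d}\widehat f$ degenerate in the $S^1$-direction, so $\wedge^2(\mathrm{d}\widehat f)$ vanishes whenever one of the two vectors points along $S^1$; consequently the curvature term $\tfrac12\sum c(e_i)c(e_j)R^E(e_i,e_j)$ for the bundle $\widehat f^*(S(TS^{n+1}(1)))$ is controlled not by $(n+1)n|\wedge^2(\mathrm{d}\widehat f)|$ but, because of this rank-drop, by the smaller quantity $n(n-1)|\wedge^2(\mathrm{d}\widehat f)|$ — one recovers exactly the estimate used in~\eqref{1.16}. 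Making this curvature bookkeeping precise is the heart of the proof: one decomposes an orthonormal frame of $T\widehat M$ into the frame $e_1,\dots,e_n$ of $TM$ pulled up and the unit tangent $e_{n+1}$ to $S^1$, notes $\mathrm{d}\widehat f(e_{n+1})=0$, and reruns the computation of \cite[equation~(4.6)]{L98} to see that only the $\binom n2$ genuinely two-dimensional blocks contribute.

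After this reduction the rest follows the template of Section~\ref{s1.2} verbatim. I would introduce on $\widehat M$ the twisted Dirac operator $D^{\widehat E}$ with $\widehat E=\widehat f^*(S(TS^{n+1}(1)))$, deform it by $D^{\widehat E}_\varepsilon=D^{\widehat E}+\varepsilon\widehat\varphi\,\widehat f^*V$ with $V=v+v^*$ built from a vector field $X$ on $S^{n+1}(1)$ nonvanishing away from a point outside $\widehat f(\widehat M\setminus \widehat K)$, and with a cutoff $\widehat\varphi$ equal to $1$ off $\operatorname{Supp}(\mathrm{d}\widehat f)$ and on the region where $|\mathrm{d}\widehat f|$ is small, and equal to $0$ where $|\wedge^2(\mathrm{d}\widehat f)|>\tfrac12$. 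The near-infinity invertibility estimate~\eqref{1.10}, the index computation~\eqref{1.11} giving $\operatorname{ind}(D^{\widehat E}_{\varepsilon,+})=2(-1)^{(n+1)/2}\deg(\widehat f)\ne 0$, the Lichnerowicz–Llarull positivity~\eqref{1.15}–\eqref{1.18}, and the relative index theorem all go through; assuming~\eqref{1.25} fails gives $k^{T\widehat M}\ge0$ everywhere and, for $\varepsilon$ small, $(D^{\widehat E}_\varepsilon)^2\ge -\Delta^{\widehat E}+a_\varepsilon$ with $a_\varepsilon>0$ near infinity, forcing $\operatorname{ind}(D^{\widehat E}_{\varepsilon,+})=0$ — a contradiction. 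The one point demanding care, beyond the constant-matching above, is that the hypothesis~\eqref{1.24} is only a strict inequality on $\operatorname{Supp}(\mathrm{d}f)$ and carries no lower bound on the closure points where $\mathrm{d}f$ degenerates; as in Remark~\ref{t1.3} this is handled by choosing $V_{\frac12}$ slightly larger so that on $\operatorname{Supp}(\mathrm{d}\widehat f)\setminus\overline{V}_{\frac12}$ one still has a uniform positive slack in~\eqref{1.24}, and by using the compactness of $\operatorname{Supp}(\mathrm{d}\widehat f)$ to absorb the $O(\varepsilon)$ cross terms.
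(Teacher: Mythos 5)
Your overall strategy (pass to the even-dimensional product of $M$ with a circle, suspend $f$ to a map into $S^{n+1}(1)$, deform the twisted Dirac operator by a Clifford-multiplication term, and run the relative index argument of Section~\ref{s1.2}) is the same as the paper's, but the step you yourself call ``the heart of the proof'' contains a genuine error, and the missing ingredient is exactly the one the paper introduces: a circle of radius $R$ with $R\to\infty$. With a fixed $S^1$ and $\widehat f=h\circ(f\times\mathrm{id})$, the differential in the circle direction, ${\rm d}\widehat f(e_{n+1})={\rm d}h(\partial_t)$, is \emph{not} zero; indeed it cannot vanish identically, since then $\widehat f$ would have rank at most $n<n+1$ everywhere and hence degree zero, contradicting the nonzero degree you need for the index computation --- so your two key claims (``${\rm d}\widehat f(e_{n+1})=0$'' and ``$\deg(\widehat f)=\deg(f)\neq0$'') are mutually exclusive. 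For the same reason the pointwise bound $|{\wedge}^2({\rm d}\widehat f)|\le|{\wedge}^2({\rm d}f)|$ fails in general (at a point where ${\rm d}f$ has rank one, ${\wedge}^2({\rm d}f)=0$ while ${\wedge}^2({\rm d}\widehat f)\neq0$ because of the circle direction), and the curvature estimate with the $n$-dimensional constant $n(n-1)$ instead of $(n+1)n$ is not available for a fixed-size circle: the mixed terms $c(e_i)c(e_{n+1})R^{E}(e_i,e_{n+1})$ are of order $|{\rm d}f|\cdot|{\rm d}\widehat f(e_{n+1})|$ and are not controlled by $|{\wedge}^2({\rm d}f)|$.

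The paper resolves precisely this by working on $M\times S^1(R)$ with $f_R=h\circ\big(f\times\frac1R\big)$: then ${\rm d}f_R$ in the circle direction is $O\big(\frac1R\big)$, so $\deg(f_R)=\deg(f)\neq0$ while the circle-direction contributions are small but nonzero. Llarull's computation then gives \eqref{1.36}, i.e., the constant $n(n-1)$ plus an explicit $|{\rm d}f|\,O\big(\frac1R\big)$ error, and the commutator term near infinity is $O\big(\frac1R\big)$ as in \eqref{1.32}; the strictness of \eqref{1.24} together with the compactness of $\operatorname{Supp}({\rm d}f)$ absorbs both the $O\big(\frac1R\big)$ and the $O(\varepsilon)$ terms, choosing first $\varepsilon$ small and then $R$ large. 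Your proposal never introduces this scaling parameter, so the constant-matching on which it rests cannot be carried out; once you replace the fixed circle by $S^1(R)$ and let $R\to\infty$, your argument becomes the paper's proof (and, as the paper observes, the cutoff $\widehat\varphi$ you carry along is then unnecessary, precisely because \eqref{1.24} is strict).
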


\begin{proof} For any $R>0$, let $S^1(R)$ be the round circle of radius $R$, with the canonical met\-ric~${\rm d}t^2$. Let $M\times S^1(R)$ be the complete Riemannian manifold of the product metric $g^{TM}\oplus {\rm d}t^2$. Follo\-wing~\cite{L98}, we consider the chain of maps
\begin{gather*}
 M\times S^1(R)\xrightarrow{f\times \frac{1}{R}} S^n(1)\times S^1(1) \xrightarrow{h} S^{n+1}(1),
\end{gather*}
where $\frac{1}{R}\colon S^1(R)\rightarrow S^1(1)$ is the standard shrinking map, and~$h$ is a suspension map of degree one such that $|{\rm d}h|\leq 1$. Let $f_R=h\circ \big(f\times \frac{1}{R}\big)\colon M\times S^1(R)\rightarrow S^{n+1}(1)$ denote the composition. Then one has\footnote{The degree of $f_R$ is well-defined as~$f_R$ has no regular point near infinity.}
\begin{gather}\label{1.27}
 \deg (f_R)=\deg (f)\neq 0.
\end{gather}

Let $p\in S^{n+1}(1) $ be any regular value of $f_R$. Let $X\in TS^{n+1}(1)$ be a smooth vector field on $S^{n+1}(1)$ such that $|X|>0$ on $S^{n+1}(1)\setminus \{p\}$.
Let $v=c(X)\colon S_+\big(TS^{n+1}(1)\big)\rightarrow S_-\big(TS^{n+1}(1)\big)$ be the Clifford action of~$X$. Let $V\colon S\big(TS^{n+1}(1)\big)\rightarrow S\big(TS^{n+1}(1)\big)$ be defined as in~(\ref{1.6}). Then~(\ref{1.6a}) holds and~$V$ is invertible on~$ S^{n+1}(1)\setminus \{p\}$. In particular, there is $\delta>0$ such that
\begin{gather}\label{1.29}
V^2 \geq \delta \qquad {\rm on}\quad \overline{f_R((M\setminus \operatorname{Supp}({\rm d}f))\times S^1(R)) }.
\end{gather}

For simplicity, denote $M_R=M\times S^1(R)$. Let $E_R =f_R^*\big(S\big(TS^{n+1}(1)\big)\big)$ be the ${\bf Z}_2$-graded Hermitian vector bundle over $M_R$ as in~(\ref{1.4b}).
Let $D^{E_R}\colon \Gamma\big(S(TM_R)\widehat\otimes E_R\big)\rightarrow
\Gamma\big(S(TM_R)\widehat\otimes E_R\big)$ denote the canonical twisted Dirac operator.

As in (\ref{1.7}), for any $\varepsilon>0$,
let $D^{E_R}_\varepsilon\colon \Gamma\big(S(TM_R)\widehat\otimes E_R\big)\rightarrow
\Gamma\big(S(TM_R)\widehat\otimes E_R\big)$ be the deformed twisted Dirac operator defined by\footnote{There is no need here to introduce the function~$\varphi$ as in~(\ref{1.7}), as the inequality in~(\ref{1.24}) is strict.}
\begin{gather*}
D^{E_R}_\varepsilon =D^{E_R}+\varepsilon f_R^*V.
\end{gather*}
Let $D^{E_R}_{\varepsilon,+}\colon \Gamma (S_+(TM_R) \otimes E_{R,+}\oplus S_-(TM_R)\otimes E_{R,-} ) \rightarrow \Gamma(S_-(TM_R) \otimes E_{R,+}\oplus S_+(TM_R)\otimes E_{R,-}) $ be the natural restriction.

The analogue of (\ref{1.8}) now takes the form
\begin{gather}\label{1.31}
\big(D^{E_R}_\varepsilon\big)^2 =\big(D^{E_R}\big)^2
+\varepsilon \big[D^{E_R},f_R^*V\big]+\varepsilon^2 f_R^*\big(V^2\big).
\end{gather}

Let $K$ be a compact subset of $M$ such that $f\colon M\rightarrow S^n(1)$ is locally constant on $M\setminus K$.

From (\ref{1.13}) and the definition of $f_R$, one finds on $ (M\setminus K)\times S^1(R)$ that
\begin{gather}\label{1.32}
\big[D^{E_R},f_R^*V\big] = O\left(\frac{1}{R}\right).
\end{gather}

From (\ref{1.29}), (\ref{1.31}) and (\ref{1.32}), one sees that for any $\varepsilon>0$, there exist $R_0>0$ and \mbox{$b>0$} such that when $R\geq R_0$, for any $s\in \Gamma\big(S(TM_R)\widehat\otimes E_R\big)$ supported in a compact subset of $ (M\setminus K)\times S^1(R)$, one has
\begin{gather}\label{1.33}
\big\|D^{E_R}_\varepsilon s\big\| \geq \varepsilon b \|s\|.
\end{gather}

One can then apply the relative index theorem \cite{GL83} to $D^{E_R}_{\varepsilon,+}$ and get as in (\ref{1.11}) that
\begin{gather}\label{1.34}
\operatorname{ind}\big(D^{E_R}_{\varepsilon,+}\big) = 2(-1)^{\frac{n+1}{2}}\deg (f_R ) .
\end{gather}

Let $e_1, \dots, e_{n+1}$ be an orthonormal basis of $TM_R$. The Lichnerowicz formula (\ref{1.12}) now takes the form
\begin{gather}\label{1.35}
\big(D^{E_R}\big)^2=-\Delta^{E_R}+\frac{\pi_R^*k^{TM}}{4}+\frac{1}{2}\sum_{i, j=1}^{n+1}
c (e_i )c (e_j )R^{E_R} (e_i,e_j ) ,
\end{gather}
where $\pi_R\colon M\times S^1(R)\rightarrow M$ denotes the natural projection.

By \cite[p.~68]{L98}, one has at any $(x,y)\in M\times S^1(R)$ that
\begin{gather}\label{1.36}
 \frac{1}{2}\sum_{i, j=1}^{n+1}
c (e_i )c (e_j )R^{E_R} (e_i,e_j ) \geq -\frac{n(n-1)}{4}\big|{\wedge}^2({\rm d}f(x))\big| + |{\rm d}f(x)| O\left(\frac{1}{R}\right).
\end{gather}

From (\ref{1.13}), (\ref{1.24}), (\ref{1.36}) and the compactness of $\operatorname{Supp}({\rm d}f)\subset M$, one sees that there is $\eta>0$ such that when $\varepsilon>0$ is small enough, the following formula holds on $(\operatorname{Supp}({\rm d}f))\times S^1(R)$,
\begin{gather}\label{1.37}
 \frac{\pi_R^*k^{TM}}{4}+\frac{1}{2}\sum_{i, j=1}^{n+1}
c(e_i)c(e_j)R^{E_R}(e_i,e_j) + \varepsilon \big[D^{E_R},f_R^*V\big]
\geq \eta+ (\pi_R^* |{\rm d}f| )O\left(\frac{1}{R}\right).
\end{gather}

Now we observe that (\ref{1.32}) indeed holds on $(M\setminus \operatorname{Supp}({\rm d}f))\times S^1(R)$.

From (\ref{1.29}), (\ref{1.31}), (\ref{1.32}) and (\ref{1.35})--(\ref{1.37}), one finds that if~(\ref{1.25}) does not hold, then one can first take $\varepsilon>0$ small enough, and then take $R>0$ large enough to get a positive endomorphism $a_{\varepsilon,R}$ of $S(TM_R)\widehat\otimes E_R$ such that in addition to~(\ref{1.33}), the following formula also holds,
 \begin{gather}\label{1.38}
\big(D_\varepsilon^{E_R}\big)^2 \geq -\Delta^{E_R} +a_{\varepsilon,R}.
\end{gather}

From (\ref{1.38}), one gets
\begin{gather*}
\operatorname{ind}\big(D^{E_R}_{\varepsilon,+}\big)=0,
\end{gather*}
which contradicts (\ref{1.27}) and (\ref{1.34}). This completes the proof of Theorem~\ref{t1.4}.
\end{proof}

\subsection*{Acknowledgements}

The author would like to thank the referees for careful readings and very helpful suggestions. This work was partially supported by NNSFC Grant no.~11931007.

\pdfbookmark[1]{References}{ref}
\LastPageEnding

\end{document}